\documentclass[12pt, oneside,reqno]{amsart}
\pdfoutput=1

\usepackage[a4paper,margin=1in]{geometry}
\usepackage{amsmath,amssymb,amsthm,mathtools}
\usepackage{graphicx}
\usepackage{subcaption}
\usepackage{float}
\usepackage{tikz}
\usepackage{xcolor}
\usepackage{bbold}

\usepackage[normalem]{ulem}

\usepackage{chngcntr}

\usepackage[colorlinks=true,linkcolor=blue,citecolor=blue,urlcolor=blue]{hyperref}
\usepackage[nameinlink,noabbrev]{cleveref}
\numberwithin{figure}{subsection}

\newcommand{\C}{\mathbb{C}}
 
\newcommand{\R}{\mathbb{R}}
\newcommand{\Z}{\mathbb{Z}}

\newcommand{\Q}{\mathbb{Q}}

\theoremstyle{plain}
\newtheorem{thm}{Theorem}[section]     
\newtheorem{lemma}[thm]{Lemma}

\newtheorem{corollary}[thm]{Corollary}

\theoremstyle{definition}
\newtheorem{defn}[thm]{Definition}
\newtheorem{remark}[thm]{Remark}

\author{Ghaith Hiary$^{*}$, Ali Saraeb$^{\ddagger}$}

\address{$^{*}$Mathematics Department, The Ohio State University, 231 W. 18th Avenue,  Columbus, Ohio 43210, USA}
\address{$^{\ddagger}$Mathematics Department, The Ohio State University, 231 W. 18th Avenue,  Columbus, Ohio 43210, USA}

\email{hiary.1@math.osu.edu}
\email{saraeb.1@osu.edu}

\title{Functional Equations Characterize Dirichlet Characters}

\begin{document}

\begin{abstract}
    We prove a converse theorem for functional equations of Dirichlet $L$-functions. Under mild assumptions, we prove that these functional equations for $L$-series of the form $\sum_{n\ge 1} f(n) n^{-s}$
    force the coefficient function $f$ to be a primitive Dirichlet character. Consequently, these functional equations force the existence of an Euler product.
\end{abstract}

\keywords{Dirichlet characters, Dirichlet $L$-functions, converse theorems, Gauss sums, functional equations, finite Fourier transform}

\maketitle

\section{Introduction}
Primitive Dirichlet characters have both algebraic and analytic structures. Algebraically, they are multiplicative functions on residue classes, and analytically, their completed $L$-functions satisfy functional equations. It is classical that the algebraic structure gives rise to the analytic one. In this paper, we ask the converse question: can such functional equations force the coefficient functions to be primitive characters?

This question fits into a broader theme, where one asks about properties that characterize Dirichlet characters. Most existing characterizations assume complete multiplicativity and other hypotheses, and then prove that the coefficient function is a Dirichlet character \cite{allouche2018mock,glazkov1968characters,konieczny2025multiplicative,klurman2018rigidity}. The question we are concerned with, however, is of a different flavor: can a non-multiplicative hypothesis force multiplicativity? A question of this kind was originally posed by Cohn \cite[p. 202]{montgomery1994ten}, who asked: 

\begin{quote}
    \itshape
    If $F$ is a finite field, $f: F \to \C,$ $f(0)=0,$ $|f(a)|=1$ for all $a \neq 0,$ $f(1)=1,$ and 
    \[
    \sum_{b \in F} f(b) \overline{f(a+b)} =-1,
    \]
    for all $a \neq 0,$ does it follow that $f$ is a character of $F$?
\end{quote}
This question was answered in the affirmative for finite fields of prime order by Kurlberg \cite{kurlberg2002character} and more recently by Bober and Goldmakher \cite{bober2024converse}. However, Choi and Siu then proved the answer is negative for non-prime finite fields  \cite{choi2000counter}. Our results give a new non-multiplicative criterion for primitive Dirichlet characters of $\Z/ N\Z.$

Let $N>1,$ and let $\chi$ be a primitive Dirichlet character modulo $N.$ Let $a \in \{0,1\}$ be the parity of $\chi$, defined by $\chi(-1)=(-1)^a.$ The associated Dirichlet $L$-function is 
\begin{equation}
    L(s,\chi):= \sum_{n\ge 1}  \frac{\chi(n)}{n^s}, \qquad \Re(s)>1.
\end{equation}

Such an arithmetic function $\chi$ gives rise to an entire completed Dirichlet $L$-function
\begin{equation} \label{eq: Lambda def}
    \Lambda(s,\chi):=\Bigl(\frac{N}{\pi}\Bigr)^{\frac{s+a}{2}} \Gamma \Bigl(\frac{s+a}{2}\Bigr) L(s,\chi),
\end{equation}
which satisfies the functional equation \cite[eq. (13)-(14), Chp. 9]{davenport2013multiplicative}
\begin{equation} \label{eq: funct eq}
     \Lambda(s,\chi)=\varepsilon_\chi \Lambda(1-s,\overline\chi).
\end{equation}
Here $\varepsilon_\chi$ is the root number of $\chi$ given by
\begin{equation}
    \varepsilon_\chi= \frac{\tau(\chi)}{i^a\sqrt N}, \qquad |\varepsilon_\chi|=1,
\end{equation}
where
\begin{equation}
    \tau(\chi) :=\sum_{x=1}^N \chi(x) e^{ \frac{2 \pi i x}{N}}
\end{equation}
is the Gauss sum of $\chi$.

The goal of this paper is to prove a converse statement. Namely, we show, under mild assumptions, that if the associated $L$-series satisfy functional equations of the form \eqref{eq: funct eq}, then the coefficient function is necessarily a primitive Dirichlet character. In this sense, functional equations of the form \eqref{eq: funct eq} characterize primitive Dirichlet characters. 

The conductor and order of a primitive Dirichlet character are not independent. For a primitive character $\chi$ of conductor $N$ and order $m,$ we show that 
\begin{equation}
    \begin{cases}
        2^\alpha\parallel N, \alpha \ge 2   \implies 2 \mid m,   \\
        p^\alpha \parallel N, \alpha \ge 2 \implies p \mid m, \qquad p \text{ odd}.
    \end{cases}
\end{equation}
In particular, if $(m,N)=1,$ then $N$ is necessarily odd and squarefree. It thus appears that the case $(m,N)=1$ is structurally simpler than the case $(m,N)>1$. We prove that when $(m,N)=1,$ one functional equation of the form \eqref{eq: funct eq} is enough to force both multiplicativity and primitivity. When $(m,N)>1,$ however, this is no longer true in general (see Section \ref{sec: conv to Gauss thm}). Our theorem gives a sufficient condition: it is enough to impose such functional equations for a set of powers of the arithmetic function. 

We need the following terminology to state the main theorem. For $N>1$ and $f: \Z/N\Z \to \C$, we extend $f$ periodically to an arithmetic function on $\Z,$ and we say $f$ has parity $a \in \{0,1\}$ if 
\begin{equation}
    f(-x)= (-1)^a f(x), \qquad x \in  \Z.
\end{equation}

\begin{thm} \label{thm: mainthm}
    Let $N,m>1$ be integers. Let $f: \Z /N\Z \to \mu_m \cup\{0\}$ have order $m$ and parity $a \in \{0,1\},$ with $f(1)=1$ and $f(n)=0$ iff $(n,N)>1.$ Put $d=(m,N),$ and let $H\leq(\Z/m\Z)^\times$ be such that the reduction map $\pi : H \to (\Z/d\Z)^\times$ is surjective. Assume that, for each $c \in H,$ $L(s, f^c)$ is holomorphic at $s=1$ and that there exists $\varepsilon_c \in\C^\times$ such that 
    \begin{equation}\label{eq: func eq lambda fc}
        \Lambda(s,f^c)=\varepsilon_c \Lambda(1-s,\overline {f^c}), \qquad s \in \C.
    \end{equation}
    Then $f$ is a primitive character modulo $N.$ Furthermore, if $(m,N)=1,$ then $N$ is necessarily odd and squarefree.
\end{thm}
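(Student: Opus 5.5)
We first convert each functional equation \eqref{eq: func eq lambda fc} into a finite Fourier identity. The plan is the classical Hecke-type argument. Write $\Lambda(s,f^c)$ as a Mellin transform of the theta series $\theta_c(t)=\sum_{n\in\Z} n^a f^c(n) e^{-\pi n^2 t/N}$. The holomorphy of $L(s,f^c)$ at $s=1$, together with the parity, controls the possible poles. The functional equation then gives a theta transformation $\theta_c(1/t)=\varepsilon_c' t^{a+1/2}\,\overline{\theta_c}(t)$. Applying Poisson summation to the periodic function $f^c$ turns this into
\begin{equation*}
\widehat{f^c}(n):=\sum_{x=1}^N f^c(x)e^{2\pi i xn/N} = \eta_c\,\overline{f^c(n)}, \qquad n\in\Z,
\end{equation*}
with $\eta_c=\varepsilon_c i^a\sqrt N$ and $|\eta_c|=\sqrt N$ (Parseval gives the modulus). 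Uniqueness of Dirichlet coefficients lets us identify the Fourier coefficients term by term. The point is that $f^c$ is, up to a constant, a self-dual eigenfunction of the finite Fourier transform, including at $n$ with $(n,N)>1$, where it vanishes.

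Next we extract multiplicativity. For $u$ a unit mod $N$, substituting $x\mapsto u^{-1}x$ gives $\widehat{f^c}(u)=\sum_x f^c(u^{-1}x)e^{2\pi i x/N}$. Comparing with $\widehat{f^c}(u)=\eta_c\overline{f^c(u)}$ and $\widehat{f^c}(1)=\eta_c$ suggests the following. Consider $g_u(x)=f(ux)\overline{f(u)}\,\overline{f(x)}$, or better the functions $x\mapsto f(ux)$ and $f(u)f(x)$, both supported on units. Since $f$ takes values in $\mu_m$, the identity for all $c\in H$ amounts to statements about sums of roots of unity $\sum_x \zeta^{c\,k(x)} e(x/N)$. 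Group the $x$ according to the value $f(x)=\zeta_m^{k}$, and set $S_k(n)=\sum_{x:\,f(x)=\zeta_m^k} e(xn/N)\in\Q(\zeta_N)$. Then $\widehat{f^c}(n)=\sum_k \zeta_m^{ck}S_k(n)$. We apply Galois automorphisms of $\Q(\zeta_{mN})$: those fixing $\zeta_N$ act on $\zeta_m$ by exponents $c$, and $\mathrm{Gal}(\Q(\zeta_{mN})/\Q(\zeta_N))$ corresponds to $c\equiv1 \pmod d$. The surjectivity $H\to(\Z/d\Z)^\times$ is exactly what lets the $c\in H$, combined with these automorphisms, cover all of $(\Z/m\Z)^\times$. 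From this we aim to conclude that $\widehat{f^c}=\eta_c\overline{f^c}$ holds for every $c$ coprime to $m$. Taking inverse discrete Fourier transforms in the $k$-variable over $\mu_m$ then determines the individual $S_k(n)$ up to the unknown constants $\eta_c$. This is where the multiplicative structure is recovered: we want to deduce that the level sets $\{x: f(x)=\zeta^k\}$ are permuted by multiplication by units, i.e. $f(ux)=f(u)f(x)$. Concretely, we would show that $S_k(u)=S_{k-k_u}(1)$ where $f(u)=\zeta^{k_u}$, and then invert the Fourier transform in $n$.

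We expect this step to be the main obstacle. Non-primitive $c$ (those with $(c,m)>1$) are not directly available, and only the sums over all $x$, not the level sets, are controlled. One would try the Galois action on $\zeta_N$ (the exponents $u$), which sends $S_k(1)$ to $S_k(u)$, combined with linear independence of the functions $c\mapsto \zeta^{ck}$ over primitive $c$. Once $f$ is shown to be a completely multiplicative function on units, it is a Dirichlet character mod $N$. Primitivity then follows because $\widehat f(n)=\eta\overline{f(n)}$ with $\eta\ne0$ forces $\tau(f)\neq 0$ and the vanishing of $\widehat f$ at non-units; an imprimitive character induced from conductor $N'<N$ has a Gauss sum that fails this (it is zero or nonvanishing at some $n$ with $(n,N)>1$). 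Finally, the claim that $N$ is odd and squarefree when $(m,N)=1$ follows from the conductor–order relation stated in the introduction. For $p^\alpha\parallel N$ with $\alpha\ge2$, the $p$-part of the character is primitive mod $p^\alpha$ and hence nontrivial on $1+p^{\alpha-1}\Z$, a group of order $p$, forcing $p\mid m$. For $p=2$, primitive characters mod $4$ or higher have even order, and there is no primitive character mod $2$.
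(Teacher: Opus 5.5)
Your first step follows the paper: Mellin inversion to a theta transformation law, then Poisson summation and comparison of the coefficients of $e^{-\pi n^2t/N}$. This gives $\widehat{f^c}(n)=\eta_c\overline{f^c(n)}$, i.e.\ $\tau(f^c,n)=\eta_c\overline{f^c(n)}$, for all $n$ and all $c\in H$. Your primitivity argument and your ``odd and squarefree'' argument are fine. The latter, via the order-$p$ subgroup $\{x\equiv 1 \pmod{p^{\alpha-1}}\}$ of $(\Z/p^\alpha\Z)^\times$, is cleaner than the paper's explicit description of characters. Your remark that automorphisms fixing $\zeta_N$ extend the identity from $H$ to all $c\in(\Z/m\Z)^\times$ is also correct.

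The genuine gap is multiplicativity. This is the heart of the theorem, and you leave it open. Moreover, the mechanism you sketch for it cannot work. The $m$ functions $c\mapsto\zeta_m^{ck}$, $k\in\Z/m\Z$, restricted to $c\in(\Z/m\Z)^\times$, live in a space of dimension $\varphi(m)<m$, so they are linearly dependent. Hence there is no inverse transform in $k$ that recovers the level sums $S_k(n)$ from the identities for unit $c$, with or without your extension. Nor can the Galois action on $\zeta_N$ be used in isolation. When $d>1$, every automorphism of $\Q(\zeta_{mN})$ with $\zeta_N\mapsto\zeta_N^u$ acts on $\zeta_m$ by some exponent $c\equiv u\pmod d$. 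So applied to the identity for $f$, it necessarily produces an identity for $f^c$, not for $f$.

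The missing idea is to exploit exactly this coupling and compare two Fourier identities directly, with no level sets.
\begin{itemize}
\item Fix a unit $s$ modulo $N$. By surjectivity of $\pi$, choose $c\in H$ with $c\equiv s\pmod d$.
\item Since $d=(m,N)$, the Chinese remainder theorem gives $x$ with $x\equiv s\pmod N$, $x\equiv c\pmod m$, and $(x,mN)=1$.
\item Let $\sigma$ be the automorphism $\zeta_{mN}\mapsto\zeta_{mN}^x$. Then $\sigma(\zeta_N)=\zeta_N^s$ and $\sigma(f(y))=f(y)^c$ for all $y$.
\item Apply $\sigma$ to $\tau(f,r)=\eta_1\overline{f(r)}$ for a unit $r$; note $\eta_1=\tau(f,1)\in\Q(\zeta_{mN})$. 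This gives $\tau(f^c,sr)=\sigma(\eta_1)f(r)^{-c}$.
\item The hypothesis for $c$ gives $\tau(f^c,sr)=\eta_c f(sr)^{-c}$.
\item Taking $r=1$ yields $\sigma(\eta_1)=\eta_c f(s)^{-c}$. Hence $f(sr)^c=f(s)^cf(r)^c$, and raising to the inverse of $c$ modulo $m$ gives $f(sr)=f(s)f(r)$.
\end{itemize}
This single step is where the hypothesis on $H$ is used, and it is what your proposal needs in place of the level-set analysis.
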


\begin{remark} \label{rem: on H}
    When $(m,N)=1$, the theorem applies with $H= \{1\},$ and when $(m,N)>1$, the theorem always applies with the choice $H=(\Z/m\Z)^\times,$ since the reduction map $\pi$ is surjective for $d \mid m.$ Assuming \eqref{eq: func eq lambda fc} only for $f$ is insufficient in general (see Section \ref{sec: conv to Gauss thm}).
\end{remark}

\begin{remark}
    If $f$ is a primitive character modulo $N$ of order $m$ and $c \in (\Z /m\Z)^\times,$ then $f^c$ is also a primitive character modulo $N$ of order $m$ (see Section \ref{sec: conv to Gauss thm}).
\end{remark}

In particular, we have the following immediate consequence.

\begin{corollary}
    Under the hypotheses of \Cref{thm: mainthm}, $L(s,f)$ admits the Euler product 
    \begin{equation}
        L(s,f)= \prod_p \Bigl( 1- \frac{f(p)}{p^{s}} \Bigr)^{-1}, \qquad \Re(s)>1.
    \end{equation}
\end{corollary}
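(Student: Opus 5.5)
The corollary follows at once from \Cref{thm: mainthm} combined with the classical Euler product for completely multiplicative coefficients, so the plan has only two steps. First, I invoke \Cref{thm: mainthm}: under its hypotheses, $f$ (extended periodically to $\Z$) is a primitive Dirichlet character modulo $N$. In particular $f$ is \emph{completely multiplicative}, meaning $f(nk)=f(n)f(k)$ for all $n,k\ge 1$ and $f(1)=1$. It is also bounded, since $|f(n)|\le 1$ because $f$ takes values in $\mu_m\cup\{0\}$. This is the only place the functional equations \eqref{eq: func eq lambda fc} are used.

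Second, I run the standard Euler product argument for a bounded, completely multiplicative arithmetic function. Fix $s$ with $\Re(s)=\sigma>1$.
\begin{itemize}
\item For each prime $p$ we have $|f(p)p^{-s}|\le p^{-\sigma}<1$, so the geometric series gives $\bigl(1-f(p)p^{-s}\bigr)^{-1}=\sum_{k\ge0} f(p)^k p^{-ks}$. Complete multiplicativity lets us rewrite this as $\sum_{k\ge 0} f(p^k)p^{-ks}$.
\item For $P\ge 2$, multiply these finitely many absolutely convergent series over $p\le P$. Unique factorization and complete multiplicativity then give
\begin{equation*}
\prod_{p\le P}\Bigl(1-\frac{f(p)}{p^{s}}\Bigr)^{-1}=\sum_{n\in S_P}\frac{f(n)}{n^{s}},
\end{equation*}
where $S_P$ is the set of positive integers all of whose prime factors are at most $P$.
\item Every $n\le P$ lies in $S_P$. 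Hence the difference between this expression and $L(s,f)$ is bounded in absolute value by $\sum_{n>P} n^{-\sigma}$, which tends to $0$ as $P\to\infty$.
\end{itemize}
This proves the product formula for $\Re(s)>1$. Since $\sum_p |f(p)|p^{-\sigma}<\infty$, the product also converges absolutely.

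There is no real obstacle here: all of the difficulty lies in \Cref{thm: mainthm}, namely forcing multiplicativity from the functional equations. The only point to check is that the values $f(p)=0$ for $p\mid N$ cause no trouble. They do not, because those local factors are simply $1$, consistent with $f(p^k)=0$ for $k\ge1$.
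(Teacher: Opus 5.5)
Your proposal is correct and follows the paper's approach: the paper states the corollary as an immediate consequence of \Cref{thm: mainthm}. Once $f$ is known to be a primitive Dirichlet character, it is completely multiplicative and bounded, so the standard Euler product argument you give (including your handling of the primes $p \mid N$) is exactly what is needed.
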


The main ingredient in the proof of \Cref{thm: mainthm} is a converse to Gauss's theorem on the separability of Gauss sums of primitive Dirichlet characters, and we expect this result to be of independent interest. To state the result, for a function $f: (\Z /N\Z)^\times \to \C^\times$, define the normalized Fourier transform by
\begin{equation}
    \hat f(\xi):= \frac{1}{\sqrt{N}} \sum_{x \in \Z /N\Z} f(x) e(-\frac{ x \xi}{N}), \qquad \xi  \in \Z/N\Z.
\end{equation}

\begin{thm} \label{thm: prop1 case 2}
    Let $N,m>1$ be integers. Let $f: \Z /N\Z \to \mu_m \cup\{0\}$ have order $m,$ with $f(1)=1$ and $f(n)=0$ iff $(n,N)>1.$ Put $d=(m,N),$ and let $H\leq(\Z/m\Z)^\times$ be such that the reduction map $\pi : H \to (\Z/d\Z)^\times$ is surjective. Assume that for each $c\in H,$ there exists $\lambda_c \in \C^\times$ such that for all $\xi \in \Z /N\Z,$
    \begin{equation} \label{eq: assume prop1 case 2}
        \widehat{f^c}(\xi)= \lambda_c \overline{f^c(\xi)}, \qquad \xi \in \Z /N\Z.
    \end{equation}
    Then $f$ is a primitive character modulo $N.$ Furthermore, if $(m,N)=1,$ then $N$ is necessarily odd and squarefree.
\end{thm}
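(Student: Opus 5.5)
The plan is to use Galois conjugation on the cyclotomic field generated by the values of $f$ and by $e(1/N)$. Write $G(\xi):=\sqrt N\,\widehat f(\xi)=\sum_{x}f(x)e(-x\xi/N)$, and similarly $G_c(\xi):=\sqrt N\,\widehat{f^c}(\xi)$. These are elements of $K=\Q(\zeta_M)$ with $M=\mathrm{lcm}(m,N)$. The hypothesis \eqref{eq: assume prop1 case 2} then reads $G_c(\xi)=\mu_c\overline{f^c(\xi)}$ with $\mu_c:=\sqrt N\lambda_c$. Putting $\xi=1$ gives $\mu_c=G_c(1)\in K$, so $\sqrt N$ never enters the field.

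\emph{Step 1 (multiplicativity).} Fix a unit $u\in(\Z/N\Z)^\times$.
\begin{itemize}
\item By surjectivity of $\pi:H\to(\Z/d\Z)^\times$, choose $c\in H$ with $c\equiv u \pmod d$.
\item Since $\gcd(m,N)=d$, the Chinese remainder theorem gives $t\in(\Z/M\Z)^\times$ with $t\equiv u\pmod N$ and $t\equiv c\pmod m$. Let $\sigma_t\in\mathrm{Gal}(K/\Q)$ be the corresponding automorphism, so $\sigma_t(\zeta_m)=\zeta_m^c$ and $\sigma_t(e(1/N))=e(u/N)$.
\item Applying $\sigma_t$ to the definition of $G$ gives
\[
\sigma_t(G(\xi))=\sum_x f(x)^c e(-xu\xi/N)=G_c(u\xi)=\mu_c\,\overline{f(u\xi)}^{\,c}.
\]
\item Applying $\sigma_t$ instead to the hypothesis $G(\xi)=\mu_1\overline{f(\xi)}$, and using that $\sigma_t$ acts on $\mu_m$ by $c$-th powers, gives
\[
\sigma_t(G(\xi))=\sigma_t(\mu_1)\,\overline{f(\xi)}^{\,c}.
\]
\item At $\xi=1$ the two expressions give $\sigma_t(\mu_1)=\mu_c\overline{f(u)}^{\,c}$. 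Substituting this back and cancelling $\mu_c\neq0$ yields $f(u\xi)^c=f(u)^cf(\xi)^c$ for all $\xi$.
\end{itemize}
Since $f(u\xi),f(u)f(\xi)\in\mu_m\cup\{0\}$ and $c$ is invertible mod $m$, we get $f(u\xi)=f(u)f(\xi)$. Together with $f(1)=1$ and the vanishing on non-units, $f$ is a Dirichlet character mod $N$.

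\emph{Step 2 (primitivity).} The hypothesis gives $\widehat f(\xi)=\lambda_1\overline{f(\xi)}=0$ whenever $(\xi,N)>1$. Suppose $f$ were induced from a character modulo $N'=N/p$ for some prime $p\mid N$, i.e.\ periodic mod $N'$. Then at $\xi=N'$ we get $e(-x\xi/N)=e(-x/p)$, and I will show $G(N')\neq0$, a contradiction.
\begin{itemize}
\item If $p^2\mid N$, the units mod $N$ are exactly the lifts of units mod $N'$. The sum $G(N')$ then splits over each unit class $y$ mod $N'$ as $f(y)\sum_{k=0}^{p-1}e(-(y+kN')/p)$, and $p$ divides $N'$. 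Each inner sum equals $p\,e(-y/p)$, so $G(N')=p\sum_{y}f(y)e(-y/p)$.
\item If $p\,\|\,N$, the analogous computation excludes the lifts divisible by $p$ and produces a similar nonzero multiple of a Gauss-type sum.
\end{itemize}
In either case the resulting sum is nonzero because $|\lambda_1|=1$, which follows from Plancherel: $\sum|f|^2=\varphi(N)=\sum|\widehat f|^2=|\lambda_1|^2\varphi(N)$. I expect this bookkeeping to be the main obstacle. If it becomes messy, the cleaner route I would try first is the standard fact that a character is primitive iff its Gauss sums satisfy $\tau(\chi,\xi)=\overline\chi(\xi)\tau(\chi)$ for \emph{all} $\xi$, including non-units. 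That criterion is exactly our hypothesis at $c=1$.

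\emph{Step 3 (odd and squarefree).} If $(m,N)=1$, then $f$ is a primitive character of order $m$ with $p\nmid m$ for every $p\mid N$. The conductor–order relation stated in the introduction then rules out $p^\alpha\,\|\,N$ with $\alpha\ge2$ for every $p$, including $p=2$. So $N$ is squarefree. Finally, $2\,\|\,N$ is impossible for a primitive character, since there is no primitive character mod $2$; hence $N$ is odd.
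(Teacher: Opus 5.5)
Your Step 1 is essentially the paper's argument: the same CRT-built automorphism, evaluation at $\xi=1$ to identify $\sigma_t(\mu_1)$, cancellation of $\mu_c$, and raising to $c^{-1}$ modulo $m$. Working in $\Q(\zeta_{\mathrm{lcm}(m,N)})$ rather than $\Q(\zeta_{mN})$ changes nothing.

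The genuine gap is the main line of Step 2. Testing at $\xi=N/p$ does not produce a contradiction. The sum you need to be nonzero can vanish in both cases, and $|\lambda_1|=1$ has no bearing on it.
\begin{itemize}
\item \emph{Case $p^2\mid N$.} Take $N=8$, $p=2$, and $f$ the character modulo $8$ induced by the nontrivial character $\chi_4$ modulo $4$. Your own formula gives $G(4)=2\bigl(\chi_4(1)e(-1/2)+\chi_4(3)e(-3/2)\bigr)=2(-1+1)=0$. In general, your reduced sum $\sum^{*}_{y \bmod N/p}f(y)e(-y/p)$ is a Gauss sum modulo $N/p$ at the argument $N/p^2$. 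This argument is a non-unit unless $N=p^2$, so for $N\neq p^2$ the sum vanishes whenever $f$ is primitive modulo $N/p$.
\item \emph{Case $p\parallel N$.} The Chinese remainder theorem factors the sum as $G(N/p)=\bigl(\sum^{*}_{y \bmod N/p}\chi'(y)\bigr)\bigl(\sum_{z\in(\Z/p\Z)^\times}e(-z/p)\bigr)=-\sum^{*}_{y \bmod N/p}\chi'(y)$. This is $0$ for every nontrivial $\chi'$.
\end{itemize}

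The contradiction lives at a different $\xi$. If $p^2\mid N$, it already appears at $\xi=1$: $G(1)=\sum_{y \bmod N/p}f(y)e(-y/N)\sum_{k=0}^{p-1}e(-k/p)=0\neq\mu_1$. Uniformly, you should test at $\xi=N/N^*$, where $N^*$ is the conductor and $\chi^*$ the primitive character inducing $f$. The map $(\Z/N\Z)^\times\to(\Z/N^*\Z)^\times$ has equal fibres, which gives $G(N/N^*)=\frac{\varphi(N)}{\varphi(N^*)}\sum^{*}_{y \bmod N^*}\chi^*(y)e(-y/N^*)\neq0$. The hypothesis, however, forces $G(N/N^*)=0$ when $N^*<N$.

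That is precisely the standard criterion you list as a fallback, and the fallback is exactly what the paper does. Using multiplicativity from Step 1, the hypothesis at $c=1$ gives $\tau(f,\xi)=G(-\xi)=\mu_1\overline{f(-1)}\,\overline{f(\xi)}=\tau(f)\overline{f(\xi)}$ for all $\xi$. So it is "the hypothesis" only after this $f(-1)$ adjustment. The paper then cites Apostol's characterization of primitive characters by separability of all Gauss sums. Lead with that route and drop the $\xi=N/p$ computation.

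Step 3 is circular as written rather than wrong. The conductor--order relation stated in the introduction is not an independent input in the paper. It is established precisely in the last part of this proof, from the explicit shape of characters modulo $2^\nu$ (via $\chi(5)$) and modulo $p^\alpha$ (via a primitive root). You can supply it with a short uniform argument:
\begin{itemize}
\item Suppose $p^2\mid N$ and $f$ is primitive. Then $N/p$ is not an induced modulus, so some unit $x=1+kN/p$ has $f(x)\neq1$.
\item Since $(N/p)^2=N\cdot N/p^2\equiv0\pmod N$, the binomial expansion gives $x^p\equiv1+kN\equiv1\pmod N$.
\item Hence $f(x)$ has order exactly $p$, so $p\mid m$, contradicting $(m,N)=1$.
\end{itemize}
This covers $p=2$ as well. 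Your observation that there is no primitive character modulo $2$ then excludes $2\parallel N$.
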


\begin{remark}
    The statement of Remark \ref{rem: on H} holds for \Cref{thm: prop1 case 2}.
\end{remark}

\section{A converse theorem for Gauss sums} \label{sec: conv to Gauss thm}

Our first and main step in proving \Cref{thm: mainthm} is to prove a converse to Gauss's theorem on the separability of Gauss sums of primitive Dirichlet characters. We shall show in Section \ref{sec: the main results} that \Cref{thm: mainthm} is a consequence of this result.

Let $N >1$ be an integer. We seek natural sufficient conditions under which a function $f: (\Z /N\Z)^\times \to \C^\times$ is necessarily a primitive Dirichlet character modulo $N$. Throughout, we assume that $f$ has a finite order $m\ge 1$, i.e.
\begin{equation} \label{eq: img f}
    \text{im}(f):= \{ f(x): x \in (\Z /N\Z)^\times \} \subset \mu_m,
\end{equation}
where $\mu_m$ denotes the set of $m^{th}$ roots of unity and $m$ is chosen to be the minimal natural number satisfying \eqref{eq: img f}.

In the study of Dirichlet characters, it is standard to consider their inner product with the additive character $e(\cdot/N)$ \cite[Chp. 9]{Montgomery_Vaughan_2006}, where we write
\begin{equation}
    e(x):= e^{2\pi i x}.
\end{equation}
Extending $f$ to $\Z/N\Z$ by setting $f(x)=0$ whenever $(x,N)>1$, we study the Gauss sum of $f$ defined as
\begin{equation}
    \tau(f,n):= \sum_{x \in \Z/N\Z} f(x) e(\frac{nx}{N}), \qquad n \in \Z/N\Z.
\end{equation}
The Gauss sum can be viewed as a discrete Fourier transform of $f$ evaluated at $-n.$ When $n=1,$ we shall write $\tau(f)$ for $\tau(f,1).$

For a primitive Dirichlet character $f$ modulo $N$, the Gauss sum satisfies the classical identities
\begin{equation} \label{eq: prop1}
    f(n) \tau \Bigl( \bar{f} \Bigr)= \tau \Bigl( \bar{f}, n \Bigr), \qquad  n \in \Z/N\Z,
\end{equation}
and
\begin{equation} \label{eq: prop2}
    |\tau(f)| = \sqrt{N}.
\end{equation}
See, for example, \cite[Thm. 9.7]{Montgomery_Vaughan_2006}. We shall refer to identity \eqref{eq: prop1} as the separability of Gauss sums.

\begin{remark}
    If $f$ is a Dirichlet character, then $|f(n)|=1$ for every $(n,N)=1.$ Hence the above two identities \eqref{eq: prop1} and \eqref{eq: prop2} imply
    \begin{equation}
        \Bigl|\tau \Bigl( \bar{f}, n \Bigr)\Bigr|=\sqrt{N}, \qquad (n,N)=1.
    \end{equation}
\end{remark} 
It is natural to ask whether the converse holds. In other words, if $f$ satisfies an analogue of \eqref{eq: prop1} or \eqref{eq: prop2}, does it follow that $f$ is a primitive Dirichlet character? Questions of this type have attracted attention in the past two decades; see, for example, \cite{kurlberg2002character, bober2024converse, choi2000counter}.

In a recent paper \cite{bober2024converse}, Bober and Goldmakher partially answered the question in the affirmative, under some mild assumptions, for the case of $N=p$ prime. Specifically, they assumed that $f(0)=0,$ $f(1)=1,$ and the image of $f$ consists of $m^{th}$ roots of unity, with $p \nmid m,$ and they proved that if \eqref{eq: prop2} holds, then $f$ is a primitive character \cite[Thm. 1.2]{bober2024converse}. In this section, we complement \cite{bober2024converse, kurlberg2002character} by presenting a converse theorem to the separability property \eqref{eq: prop1} for general moduli $N.$ This is the form needed in the present paper.

We emphasize, however, that the identity \eqref{eq: prop2} is much weaker than the separability identity \eqref{eq: prop1}. In fact, the analogue of the result proven in this section, with the assumption \eqref{eq: prop1} replaced by \eqref{eq: prop2}, is false in general (see Remark \ref{note: prop2 false}). A converse to \eqref{eq: prop2} for general moduli $N$ requires different algebraic methods and assumptions, and will be treated in a forthcoming paper.

One first asks whether the separability of $f$
\begin{equation}\label{eq: f sep}
    \widehat{f}(\xi)= \lambda \overline{f(\xi)}, \qquad \xi \in \Z /N\Z,
\end{equation}
for some $\lambda \in \C^\times$, is sufficient to force the multiplicativity of $f.$ This is true when $(m,N)=1$, and the proof, in this case, uses the essential fact that, for every $s\in(\Z/N\Z)^\times,$ it is possible to construct an automorphism of $\Q(\zeta_{mN})$ that fixes $\Q(\zeta_m)$ and maps $\zeta_N$ to $\zeta_N^s.$ 

This is no longer possible when $(m,N)>1.$ It turns out that this is not a limitation of the proof, since functions satisfying the Fourier identity \eqref{eq: f sep} need not be multiplicative in general. Indeed, consider the following counterexample. Let
\begin{equation}
    f:\Z/9\Z\to \mu_3\cup\{0\}
\end{equation}
be defined by
\begin{equation} \label{eq: f counterex}
f(n)=
\begin{cases}
0, & 3\mid n, \\
1, & n\equiv 1,2 \pmod 9,  \\
e(2/3), & n\equiv 4,8 \pmod 9, \\
e(1/3), & n\equiv 5,7 \pmod 9.
\end{cases}
\end{equation}
Then $f(1)=1$ and $f(n)=0$ iff $(n,9)>1.$ Moreover, 
\begin{equation} \label{eq: f counterex hat f}
    \hat f(\xi)=e(7/9) \overline{f(\xi)},
\qquad \xi\in \Z/9\Z.
\end{equation}
However, $f$ is clearly not multiplicative, since for example, 
\begin{equation}\label{eq: f not mul}
    f(2)^2 = 1 \neq e(2/3) = f(4).
\end{equation}
On the contrary, there is a primitive Dirichlet character modulo $9$ of order $3$. Namely, 
\begin{equation}
    \chi_9(n): =
    \begin{cases}
        1, & n\equiv 1,8 \pmod 9, \\
        e(1/3), & n\equiv 2,7 \pmod 9, \\
        e(2/3), & n\equiv 4,5 \pmod 9.
    \end{cases}
\end{equation}
In other words, there are multiplicative functions $f$ satisfying the Fourier identity \eqref{eq: f sep} in the case of $N=9,$ $m=3.$ Therefore, the separability identity \eqref{eq: prop1} is generally insufficient by itself to imply multiplicativity in the case where $(m, N)>1$.

To formulate the correct converse, we recall the following property of primitive Dirichlet characters. Suppose $\chi: (\Z /N\Z)^\times \to \mu_m $ is a primitive character modulo $N$ of order $m.$ Then for every $c \in (\Z /m\Z)^\times,$ the character 
\begin{equation}
    \chi^{c}(x):= \chi(x)^c, \qquad  x \in \Z /N\Z,
\end{equation}
is again primitive modulo $N$ of order $m.$ 

To see why this is true, recall that a character $\psi$ modulo $N$ is primitive iff it has no induced modulus $d < N$ \cite[p. 221]{apostol2013introduction}. An induced modulus $d$ for $\psi$ is a divisor of $N$ such that 
\begin{equation}
    \psi(n)= 1,
\end{equation}
whenever
\begin{equation} \label{eq: ind mod n}
    (n,N)=1, \qquad n \equiv 1 \pmod d.
\end{equation}
Now let $\chi$ be a primitive character modulo $N$ of order $m,$ and suppose, for a contradiction, that there exists $c \in (\Z /m\Z)^\times,$ such that $\chi^c$ is imprimitive modulo $N.$ Then there exists a divisor $d$ of $N$ such that 
\begin{equation} \label{eq: chi c }
    \chi(n)^c=1
\end{equation}
for all $n$ satisfying \eqref{eq: ind mod n}. However, $c$ has an inverse $c^{-1} \in (\Z /m\Z)^\times.$ Thus, since $\chi(n) \in \mu_m$ for all $(n,N)=1,$ it follows by raising both sides of \eqref{eq: chi c } to the power $c^{-1}$ that $\chi$ has an induced modulus $d$, contradicting primitivity of $\chi$.

Thus, in particular, if $\chi$ is such a primitive character, then  $\chi^{c}$ satisfies the separability property \eqref{eq: prop1} for every $c \in (\Z /m\Z)^\times.$ It turns out that this property characterizes primitive characters in the sense of \Cref{thm: prop1 case 2}. 

\begin{remark} \label{note: prop2 false}
    We note that the analogue of \Cref{thm: prop1 case 2}, with the separability assumption \eqref{eq: prop1} replaced by \eqref{eq: prop2}, is false in general.
    Indeed, consider the example in \eqref{eq: f counterex}, where $N=9$ and $m=3$. By \eqref{eq: f counterex hat f}, it is clear that $|\tau(f)| = \sqrt{N}.$ Moreover, from the definition \eqref{eq: f counterex}, we see that $f^2= \bar{f},$ so that conjugating \eqref{eq: f counterex hat f} implies that $|\tau(f^2)| = \sqrt{N}.$ However, by \eqref{eq: f not mul}, $f$ is not multiplicative.
\end{remark}

To prove \Cref{thm: prop1 case 2}, it is convenient to introduce the unnormalized Fourier transform of $f$, defined by
\begin{equation}\label{def: FN f}
    \mathcal F f(\xi):=  \sum_{x \in \Z /N\Z} f(x) e(-\frac{ x \xi}{N})= \sqrt{N} \hat f(\xi), \qquad \xi  \in \Z/N\Z.
\end{equation}

\begin{proof}[proof of \Cref{thm: prop1 case 2}]
    Throughout, we write $\zeta_q: = e(1/q)$ for $q \in \Z_{\ge1}.$ We begin by rewriting \eqref{eq: assume prop1 case 2} in terms of \eqref{def: FN f}. For every $c\in H,$ write 
    \begin{equation} \label{eq: Ffc}
        \mathcal{F} [f^c](\xi)= \Lambda_c \overline{f^c(\xi)}, \qquad \xi \in \Z /N\Z,
    \end{equation}
    where $\Lambda_c := \sqrt{N} \lambda_c$. In particular, $\Lambda_c \neq0$ for every $c\in H$. Moreover, evaluating \eqref{eq: Ffc} at $c=1$ and $\xi=1$ and using $f(1)=1,$ we observe that 
    \begin{equation} \label{eq: Lambda FN f}
         \Lambda_1= \mathcal F f(1)=  \sum_{x \in \Z /N\Z} f(x) \zeta_N^{- x} \in \Q(\zeta_{mN}).
    \end{equation}

    We first prove $f$ is multiplicative. Let $r,s \in (\Z /N\Z)^\times.$ Since $(s,N)=1$ and $d \mid N,$ we have $(s,d)=1.$ Thus, since $\pi : H \to (\Z/d\Z)^\times$ is surjective, it follows that there exists $c_s \in H$ such that $c_s \equiv s \pmod d.$

    Therefore, by the Chinese remainder theorem, there exists an integer $x_s$ such that 
    \begin{equation} \label{eq: CRT}
        x_s \equiv s \pmod N, \qquad x_s \equiv c_s \pmod m.
    \end{equation}
    
    Using \eqref{eq: CRT}, $(s,N)=1,$ and $c_s \in H$, we have $(x_s, mN)=1.$ Therefore, the map 
    \begin{equation} \label{eq: auto case1}
        \sigma_s: \Q(\zeta_{mN}) \to \Q(\zeta_{mN}), \qquad \sigma_s(\zeta_{mN})= \zeta_{mN}^{x_s},
    \end{equation}
    is a field automorphism of $\Q(\zeta_{mN}),$ with the property  
    \begin{equation}
        \sigma_s( \zeta_N)= \zeta_N^s, \qquad \sigma_s( \zeta_m)= \zeta_m^{c_s}.
    \end{equation}
    In particular, since $f(x)$ is an $m^{th}$ root of unity for every $x \in (\Z /N\Z)^\times$ and is $0$ otherwise, we have 
    \begin{equation}
        \sigma_s \bigl(f(x) \bigr)= f(x)^{c_s}, \qquad x \in \Z /N\Z.
    \end{equation}
    Therefore, applying $\sigma_s$ to \eqref{eq: Ffc} with $c=1$ and $\xi=r$,  we obtain 
    \begin{equation} \label{eq: F fu}
        \mathcal{F} [f^{c_s}](sr)= \sigma_s \bigl(  \Lambda_1 \bigr) f(r)^{-c_s}.
    \end{equation}
    On the other hand, using \eqref{eq: Ffc} with $c=c_s$ and $\xi=sr,$ and substituting into \eqref{eq: F fu}, we get 
    \begin{equation} \label{eq: F fu2}
       \Lambda_{c_s} f(sr)^{-c_s}= \sigma_{s} \bigl(  \Lambda_1 \bigr) f(r)^{-c_s}.
    \end{equation}
    Hence \eqref{eq: F fu2} holds for every $s, r \in (\Z /N\Z)^\times$. In particular, evaluating \eqref{eq: F fu2} at $r=1$ and using $f(1)=1,$ it follows that 
    \begin{equation} \label{eq: sigm u lam}
    \Lambda_{c_s} f(s)^{-c_s}= \sigma_s \bigl(  \Lambda_1 \bigr).
    \end{equation}
    Therefore, substituting \eqref{eq: sigm u lam} into the right hand side of \eqref{eq: F fu2}, dividing by $\Lambda_{c_s},$ and conjugating both sides, \eqref{eq: F fu2} becomes
    \begin{equation} \label{eq: f u mul}
         f(sr)^{c_s}=  f(s)^{c_s} f(r)^{c_s}, \qquad s, r \in (\Z /N\Z)^\times.
    \end{equation}   
    Since $c_s\in(\Z/m\Z)^\times$ and $f(s), f(r), f(sr) \in \mu_m$, raising both sides of \eqref{eq: f u mul} to the inverse of $c_s$ modulo $m$ gives
    \begin{equation} \label{eq: f u mul2}
         f(sr)=  f(s) f(r), \qquad s, r \in (\Z /N\Z)^\times.
    \end{equation} 
    This proves multiplicativity, and hence $f$ is a Dirichlet character modulo $N$ since it vanishes exactly on the non-units. Primitivity is then immediate by Gauss characterization of primitive characters \cite[Thm. 8.19]{apostol2013introduction}. Indeed, using \eqref{eq: Ffc} with $c=1$, \eqref{eq: Lambda FN f}, and multiplicativity of $f$, we obtain
    \begin{align}
        \tau ( f, \xi ) &= \mathcal F f(-\xi) = \Lambda_1 \overline{f(-\xi)}=\Lambda_1 \overline{f(-1)} ~ \overline{f(\xi)} \nonumber \\
        &= \mathcal F f(-1) \overline{f(\xi)}= \tau ( f) \overline{f(\xi)}, \qquad \xi  \in \Z/N\Z.
    \end{align}
    In other words, the Gauss sum $\tau ( f, \xi )$ is separable, whence $f$ is primitive. For more details, see \cite[p. 165-173]{apostol2013introduction}.
    
    Lastly, suppose that $(m,N)=1.$ We show that $N$ must be odd and squarefree. We first rule out the possibility that $N$ is even. Suppose, for a contradiction, that $N=2^\nu q $ with $\nu \ge 1$ and $q$ odd. If $\nu=1,$ then $f$ cannot be primitive, since there are no primitive characters modulo $2,$ a contradiction. If $\nu \ge 2,$ then $f= \chi_{2^\nu} \chi_q$ where $\chi_{2^\nu}, \chi_q $ are primitive characters modulo $2^\nu, q,$ respectively. We now derive a contradiction in the cases $\nu \ge 3$ and $\nu=2,$ separately.
    
    First, if $\nu \ge 3,$ then there exists an odd integer $1 \le c < 2^{\nu-2}$ such that for every odd integer $n$, there is a unique integer $1 \le b(n) < 2^{\nu-2}$ for which
    \begin{equation} \label{eq: char mod 2nu}
        \chi_{2^\nu}(n)= \pm e \Bigl( \frac{c~ b(n)}{2^{\nu-2}} \Bigr), \qquad 2 \nmid n.
    \end{equation}
    See, for example, \cite[p. 218, 219]{apostol2013introduction}.  Moreover, the oddness of $c$ follows from primitivity of $\chi_{2^\nu}$ and \cite[Thm. 10.15]{apostol2013introduction}.
    
    In particular, by \cite[Thm. 10.11]{apostol2013introduction}, we see that $b(5)=1,$ and thus $\text{ord}\bigl(\chi_{2^\nu}(5)\bigr)= 2^{\nu-2}$ is even\footnote{For $\nu=3,$ primitive characters modulo $8$ satisfy $\chi_{8}(5)=-1,$ which is of order $2$.}. Now by the Chinese remainder theorem, we can choose $u\in(\Z/N\Z)^\times$ such that
    \begin{equation}
            u\equiv 5 \pmod {2^\nu}, \qquad u\equiv 1 \pmod q,
    \end{equation}
    which implies that $f(u)= \chi_{2^\nu}(5)$ has an even order. This is impossible since this implies that $2 \mid \text{ord}\bigl(f(u)\bigr) |m$ (recall $f(u) \in \mu_m$), contradicting  $(m,N)=1.$ 
    
    Similarly, we can rule out the case $\nu=2,$ since the unique primitive character modulo $4$ satisfies $\chi_{2^\nu}(3)=-1,$ which is of even order, thereby again contradicting $(m,N)=1.$ 
    
    This proves that $N$ is necessarily odd. Now suppose, for a contradiction, that $N$ were not squarefree. Then there exist an odd prime $p$ and an integer $\alpha>1$ such that $p^\alpha \parallel N.$ 
    
    Write $N= p^\alpha M$ with $(p,M)=1.$ Since $f$ is a primitive character modulo $N,$ $f$ decomposes as $f= \chi_{p^\alpha} \chi_M,$ where $\chi_{p^\alpha}, $ $\chi_M$ are primitive characters modulo $p^\alpha$, $M,$ respectively. 
    
    Let $g$ be a primitive root modulo $p^\alpha.$ Then the primitive character $\chi_{p^\alpha}$ is given by
    \begin{equation}
        \chi_{p^\alpha}(n) = e \Bigl( \frac{k~ ind_g ~n}{\phi(p^\alpha)} \Bigr), \qquad  p \nmid n
    \end{equation}
    for some integer $k$ with $p \nmid k$ (see, e.g. \cite[p. 284]{Montgomery_Vaughan_2006}, \cite[Thm. 10.14]{apostol2013introduction}). In particular, 
    \begin{equation}
        \text{ord}\bigl(\chi_{p^\alpha}(g)\bigr)= \frac{\phi(p^\alpha)}{\bigl(k,\phi(p^\alpha) \bigr)}.
    \end{equation}
    Thus, we clearly have that $p \mid \text{ord}\bigl(\chi_{p^\alpha}(g)\bigr).$ 
    
    However, by the Chinese remainder theorem, there exists $h \in (\Z/N\Z)^\times$ such that 
    \begin{equation}
        h \equiv g \pmod {p^\alpha}, \qquad h \equiv 1 \pmod M,
    \end{equation}
    whereby $f(h)= \chi_{p^\alpha}(g)  \chi_M(1)= \chi_{p^\alpha}(g).$ Thus, $p \mid \text{ord}\bigl(f(h)\bigr)\mid m$ (recall $f(h) \in \mu_m$), contradicting $(m,N)=1.$ This completes the proof. 
\end{proof}

\section{Proof of \Cref{thm: mainthm}} \label{sec: the main results}

In this section, we prove the main theorem \Cref{thm: mainthm} using the converse theorem, \Cref{thm: prop1 case 2}, for Gauss sums that was established in the previous section.

Throughout, let $N>1$ and $f: \Z/N\Z \to \C$ be given, and extend $f$ periodically to an arithmetic function on $\Z.$ 

A classical proof of the functional equation for a Dirichlet $L$-function attached to a primitive Dirichlet character proceeds through theta functions. Depending on the parity of the character, one first constructs a corresponding theta function whose Mellin transform equals the completed $L$-function. Poisson summation is then used to derive a transformation law for this theta function, and this transformation law yields the functional equation of the completed $L$-function. Therefore, we will first need to understand some properties of such theta functions, their connection to Gauss sums, and the consequences of their transformation laws.

\begin{defn}
    We define the twisted theta function associated to $f$ by 
    \begin{equation}
        \psi(t,f) := \sum_{n\in\Z}f(n)n^a e^{-\pi n^2t/N}, \qquad t>0,
    \end{equation}
    where we interpret $n^a \equiv 1$ if $a=0$.
\end{defn}

If $f$ is a primitive Dirichlet character modulo $N,$ then $\psi(t,f)$ satisfies the transformation law
\begin{equation} \label{eq: psi func eq prim}
    \psi(t,f)= \frac{\tau(f)}{i^a \sqrt{N}} t^{-a-1/2} \psi(t^{-1}, \bar{f}).
\end{equation}

For a general periodic arithmetic function $f,$ we have the following more general identity, expressing $\psi(t,f)$ in terms of the corresponding Gauss sums $\tau(f,n).$

\begin{lemma} \label{lem: func eq of psi}
    Let $N\ge 1,$ $a \in \{0,1\},$ and $f: \Z/N\Z \to \C$ be given. Extend $f$ periodically to $\Z.$ Then we have 
    \begin{equation} \label{eq: psi tau}
        \psi(t,f)= \frac{i^{-a}}{\sqrt N}t^{-a-\frac12} \sum_{n\in\Z} \tau(f,n)n^a e^{-\pi n^2/(Nt)}, \qquad t>0.
    \end{equation}
\end{lemma}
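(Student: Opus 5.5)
The identity is a form of Poisson summation, so I will reduce $\psi(t,f)$ to residue classes modulo $N$, apply Poisson summation to each class, and then recombine the pieces into Gauss sums. Write $n = x + Nk$ with $x \in \{0,\dots,N-1\}$ and $k\in\Z$, so that
\begin{equation*}
\psi(t,f)=\sum_{x \bmod N} f(x)\sum_{k\in\Z} g_x(k), \qquad g_x(u):=(x+Nu)^a e^{-\pi (x+Nu)^2 t/N}.
\end{equation*}
Each $g_x$ is a Schwartz function, so Poisson summation applies and gives $\sum_k g_x(k)=\sum_{\ell\in\Z}\widehat{g_x}(\ell)$, where $\widehat{g}(\ell)=\int_\R g(u)e(-\ell u)\,du$. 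Absolute convergence of everything in sight makes it legitimate to interchange the finite sum over $x$ with these series.

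Next I compute $\widehat{g_x}(\ell)$. Substituting $v = x+Nu$ gives
\[
\widehat{g_x}(\ell)=\frac1N e\!\left(\frac{\ell x}{N}\right)\int_\R v^a e^{-\pi v^2 t/N} e\!\left(-\frac{\ell v}{N}\right)dv .
\]
The remaining integral is the standard Gaussian Fourier transform. For $a=0$ it equals $\sqrt{N/t}\,e^{-\pi \ell^2/(Nt)}$. For $a=1$, I obtain it by differentiating the $a=0$ formula in $\ell$, which gives $-i\,\ell\, t^{-1}\sqrt{N/t}\,N^{-1}\cdot N/1$ times the same exponential; I will pin down the exact constant carefully. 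The result I expect has the form
\[
\int_\R v^a e^{-\pi v^2 t/N} e(-\ell v/N)\,dv = (-i)^a \ell^a \sqrt{N}\, t^{-a-\frac12} e^{-\pi\ell^2/(Nt)}\cdot N^{0}.
\]
Before relying on this, I will verify the powers of $N$ by scaling: set $w=v/\sqrt N$ and use $\int w e^{-\pi w^2 t}e(-\eta w)\,dw=-i\eta t^{-3/2}e^{-\pi\eta^2/t}$ with $\eta=\ell/\sqrt N$.

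Summing over $x$ then produces $\sum_x f(x)e(\ell x/N)=\tau(f,\ell)$, so
\[
\psi(t,f)=\frac{(-i)^a}{\sqrt N}\,t^{-a-\frac12}\sum_{\ell}\tau(f,\ell)\,\ell^a e^{-\pi\ell^2/(Nt)},
\]
and $(-i)^a=i^{-a}$, which is exactly \eqref{eq: psi tau}. The only real obstacle is the bookkeeping of constants in the $a=1$ case: the factors $N$, $\sqrt N$, $i^{-a}$ and the power of $t$. I will settle these with the scaling computation above, and cross-check the outcome against the known law \eqref{eq: psi func eq prim} for primitive $f$, where $\tau(f,\ell)=\overline{f(\ell)}\tau(f)$.
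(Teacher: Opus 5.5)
Your proposal is correct and is essentially the paper's proof: applying Poisson summation to $g_x(u)=g(x+Nu)$ is exactly the paper's step $\sum_k g(r+kN)=\frac1N\sum_n e(nr/N)\hat g(n/N)$, followed by the Gaussian Fourier transform and recombination of the $x$-sum into $\tau(f,\ell)$. The constant you expect, $(-i)^a\ell^a\sqrt N\,t^{-a-1/2}e^{-\pi\ell^2/(Nt)}$, is right; it agrees with the paper's $\hat g(\ell/N)$, where $\hat g(\xi)=i^{-a}(N/t)^{a+1/2}\xi^a e^{-\pi N\xi^2/t}$.
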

\begin{remark}
    Note that Lemma \ref{lem: func eq of psi} does not require $f$ to be a Dirichlet character.
\end{remark}
\begin{proof}[proof of Lemma \ref{lem: func eq of psi}.]
    Let $g(x):= x^a e^{- \pi x^2 t/N}.$ Since $a \in \{0,1\},$ then 
    \begin{equation} \label{eq: fourier of g}
        \hat{g}(\xi):= \int_{-\infty}^{\infty} g(x) e(- x \xi) dx = i^{-a} \Bigl( \frac{N}{t} \Bigr)^{a+1/2} \xi^a  e^{- \pi N \xi^2/t}, \qquad \xi \in \R.
    \end{equation}
    On the other hand, by absolute convergence, the periodicity of $f,$ and Poisson summation \cite[Thm 3.1, Chp. 5]{stein2011fourier}, we have 
    \begin{align}
        \psi(t,f) &= \sum_{r=0}^{N-1}   \sum_{m \equiv r \text{ mod } N}   f(m) g(m) = \sum_{r=0}^{N-1} f(r)   \sum_{m \equiv r \text{ mod } N}  g(m) \nonumber \\
        &= \sum_{r=0}^{N-1} f(r)   \sum_{k \in \Z}  g(r + k N)  =  \frac{1}{N} \sum_{r=0}^{N-1} f(r)   \sum_{n \in \Z} e \Bigl(\frac{nr}{N} \Bigr) \hat{g} \Bigl(\frac{n}{N} \Bigr) \nonumber \\
        &= \frac{1}{N}\sum_{n \in \Z}  \Bigl[ \sum_{r=0}^{N-1} f(r) e\Bigl(\frac{nr}{N} \Bigr) \Bigr] \hat{g} \Bigl(\frac{n}{N} \Bigr)= \frac{1}{N} \sum_{n \in \Z} \tau(f,n)  \hat{g} \Bigl(\frac{n}{N} \Bigr).
    \end{align}
    Using the Fourier transform of $g$ \eqref{eq: fourier of g}, the result follows. 
\end{proof}

We are now ready to establish the converse statements to the transformation law \eqref{eq: psi func eq prim}, from which our main theorem follows.

\begin{thm} \label{thm: theta converse case 2}
    Let $N,m>1$ be integers. Let $f:\Z/N\Z\to \mu_m\cup\{0\}$ have order $m$ and parity $a\in\{0,1\}$, with $f(1)=1$ and $f(n)=0$ iff $(n,N)>1$. Put $d=(m,N),$ and let $H\leq(\Z/m\Z)^\times$ be such that the reduction map $\pi : H \to (\Z/d\Z)^\times$ is surjective. Assume that, for each $c\in H$, there exists $\varepsilon_c\in\C^\times$ such that
    \begin{equation} \label{eq: assumed theta functional eq case 2}
        \psi(t,f^c)=\varepsilon_c \, t^{-a-1/2}\psi(t^{-1},\overline{f^c}),
        \qquad t>0.
    \end{equation}
    Then $f$ is a primitive character modulo $N$. Furthermore, if $(m,N)=1,$ then $N$ is necessarily odd and squarefree.
\end{thm}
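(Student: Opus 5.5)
The plan is to reduce \Cref{thm: theta converse case 2} to \Cref{thm: prop1 case 2}. For each $c\in H$ we will extract from \eqref{eq: assumed theta functional eq case 2} the Fourier identity \eqref{eq: assume prop1 case 2} for $f^c$, and then quote that theorem for all the conclusions, including the statement that $N$ is odd and squarefree when $(m,N)=1$.

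\textbf{Step 1: rewrite both sides of the theta identity as Dirichlet-type exponential sums.} Fix $c\in H$. Then $f^c$ has the same parity $a$ as $f$, because $f^c(-x)=((-1)^a f(x))^c$ and $(-1)^{ac}=(-1)^a$: indeed $c$ is odd whenever $m$ is even, since $c\in(\Z/m\Z)^\times$. If $m$ is odd and $a=1$, then $f(-1)=-1\in\mu_m$ is impossible, so that case does not occur. Apply Lemma \ref{lem: func eq of psi} to $f^c$ to get
\[
\psi(t,f^c)=\frac{i^{-a}}{\sqrt N}t^{-a-\frac12}\sum_{n\in\Z}\tau(f^c,n)n^a e^{-\pi n^2/(Nt)}.
\]
Compare this with the assumed right-hand side $\varepsilon_c t^{-a-1/2}\sum_n \overline{f^c(n)}n^a e^{-\pi n^2/(Nt)}$. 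Cancel $t^{-a-1/2}$ and set $u=1/(Nt)$. This gives, for all $u>0$,
\[
\sum_{n\in\Z}\Bigl(\tfrac{i^{-a}}{\sqrt N}\tau(f^c,n)-\varepsilon_c\overline{f^c(n)}\Bigr)n^a e^{-\pi n^2 u}=0 .
\]

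\textbf{Step 2: uniqueness of coefficients.} Group the terms $n$ and $-n$. Both $\tau(f^c,\cdot)$ and $\overline{f^c}$ have parity $a$ (for $\tau$, substitute $x\mapsto -x$). So the left side becomes $\sum_{k\ge0}b_k e^{-\pi k u}$, where $b_k$ collects the coefficients of all $n\ge0$ with $n^2=k$. Each $b_k$ is bounded by a constant times $k^{1/2}$, so the series is a generalized Dirichlet series in $e^{-\pi u}$ with distinct exponents $k=n^2$. Letting $u\to\infty$ and peeling off the exponents one at a time shows that every coefficient vanishes. For $a=0$ the $n=0$ term is included; for $a=1$ it is absent, which is harmless. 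Hence, for every $n\ne0$ when $a=1$, and for every $n$ when $a=0$,
\[
\tau(f^c,n)=i^a\sqrt N\,\varepsilon_c\,\overline{f^c(n)}.
\]

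\textbf{Step 3: the residual case $n\equiv0$ when $a=1$.} Here we need $\tau(f^c,0)=\sum_x f^c(x)=0$. This follows from oddness, since the sum is invariant under $x\mapsto -x$ and also changes sign. Since $f^c(0)=0$, the identity then holds for all $n\in\Z/N\Z$. Because $\tau$ and $f$ are $N$-periodic, we can always pick a nonzero integer representative of each residue class anyway.

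\textbf{Step 4: conclude.} Since $\mathcal F[f^c](\xi)=\tau(f^c,-\xi)$, parity gives
\[
\widehat{f^c}(\xi)=\tfrac{1}{\sqrt N}\tau(f^c,-\xi)=i^a\varepsilon_c(-1)^a\,\overline{f^c(\xi)}.
\]
This is \eqref{eq: assume prop1 case 2} with $\lambda_c=(-i)^a\varepsilon_c\ne0$. \Cref{thm: prop1 case 2} then yields both conclusions.

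The main obstacle is making the coefficient-uniqueness argument in Step 2 rigorous. It needs the parity bookkeeping so that the terms for $n$ and $-n$ combine rather than cancel incorrectly, and it needs a dominated-convergence argument when letting $u\to\infty$. Everything else is routine.
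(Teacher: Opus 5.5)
Your proposal is correct and follows essentially the same route as the paper. Like the paper, you substitute Lemma~\ref{lem: func eq of psi} for $f^c$, cancel $t^{-a-1/2}$, compare the coefficients of $e^{-\pi n^2/(Nt)}$ after pairing $\pm n$ via parity, and feed $\lambda_c=(-i)^a\varepsilon_c$ into \Cref{thm: prop1 case 2}. The paper phrases the coefficient step as uniqueness of power-series coefficients in $q=e^{-\pi/(Nt)}$, which is the same as your peeling-off argument; your explicit handling of $n\equiv 0$ when $a=1$ just spells out what the paper leaves implicit by comparing coefficients only for $n\ge 1$ and using positive representatives of each residue class.
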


\begin{remark}
    We again note that the theorem always applies with the choice $H=(\Z/m\Z)^\times$.
\end{remark}

\begin{remark}
    If $(m,N)=1$, then the theorem applies with $H= \{1\},$ and hence it suffices to assume one theta functional equation. 
\end{remark}

\begin{proof}[proof of \Cref{thm: theta converse case 2}]
We first note that, for every $c \in H,$ $f^c$ also has parity $a.$ Indeed, 
\begin{equation}
    f^c(-x)= f(-x)^c= (-1)^{ac} f^c(x), \qquad x \in \Z.
\end{equation}
If $a=0,$ then clearly $f^c$ has parity $a.$ If $a=1,$ then $f(-1)= -1,$ so $m$ is necessarily even. Thus, since $(c,m)=1,$ $c$ is odd, whence it follows that $f^c$ has parity $a.$

Substituting \eqref{eq: psi tau} for $f^c$ into \eqref{eq: assumed theta functional eq case 2}, cancelling the $t^{-a-1/2}$ factor, and putting $q=e^{-\pi/(Nt)},$ we obtain
\begin{equation} \label{eq: identity q 01}
    \frac{i^{-a}}{\sqrt N} \sum_{n\in\Z}\tau(f^c,n)n^a q^{n^2} =\varepsilon_c \sum_{n\in\Z}\overline{f^c(n)}n^a q^{n^2}, \qquad q \in (0,1).
\end{equation}
Both sides are clearly holomorphic power series in $q$ for $|q|<1,$ and hence \eqref{eq: identity q 01} holds on the unit disk $|q|<1$ by the identity theorem. 

On the other hand, since $f^c$ has parity $a,$ we have
\begin{equation} \label{eq: parity}
    \tau(f^c,-n)=(-1)^a\tau(f^c,n), \qquad \overline{f^c(-n)}=(-1)^a\overline{f^c(n)}.
\end{equation}
Therefore, comparing the coefficients of $q^{n^2}$ in \eqref{eq: identity q 01} for $n\ge 1,$ and using \eqref{eq: parity}, we obtain 
\begin{equation}
    \frac{2i^{-a}}{\sqrt N}\tau(f^c,n)n^a= 2\varepsilon_c \overline{f^c(n)}n^a,
\end{equation}
which implies 
\begin{equation}
    \tau(f^c,n)=i^a\varepsilon_c \sqrt N\,\overline{f^c(n)}, \qquad n \in \Z/N \Z.
\end{equation}

Consequently, since $\widehat{f^c}(\xi) = \frac{1}{\sqrt{N}} \tau(f^c,-\xi),$ it follows that 
\begin{equation}
    \widehat {f^c}(\xi)= \lambda_c \overline{f^c(\xi)}, \qquad \xi\in\Z/N\Z,
\end{equation}
with $\lambda_c:=(-1)^a i^a\varepsilon_c.$ Applying \Cref{thm: prop1 case 2} completes the proof. 
\end{proof}

It remains to pass from the functional equation of the completed $L$-function $\Lambda$ to the corresponding transformation law for the theta function $\psi$. This is a standard Mellin inversion argument, but we include the details below for completeness.

\begin{proof}[proof of \Cref{thm: mainthm}]
    For $\sigma \in \R,$ write $\int_{(\sigma)}$ for the integral over the vertical line $\Re(s)=\sigma.$ As in the proof of \Cref{thm: theta converse case 2}, $f^c$ also has parity $a$ for all $c \in H.$ 
    
    However, for a periodic function $g: \Z/N\Z \to \C$ with $g(0)=0$ and parity $a,$ we have for $t>0$ and $\sigma>1,$
    \begin{align}
        \frac{1}{2 \pi i} \int_{(\sigma)} \Lambda(s,g) t^{-\frac{s+a}{2}} ds &=  \frac{1}{2 \pi i} \int_{(\sigma)} \Bigl(\frac{N}{\pi}\Bigr)^{\frac{s+a}{2}} \Gamma \Bigl(\frac{s+a}{2}\Bigr) L(s,g) t^{-\frac{s+a}{2}} ds \\
        &=  \sum_{n \ge 1} g(n) \frac{1}{2 \pi i} \int_{(\sigma)} \Bigl(\frac{N}{\pi}\Bigr)^{\frac{s+a}{2}} \Gamma \Bigl(\frac{s+a}{2}\Bigr) n^{-s} t^{-\frac{s+a}{2}} ds \\ 
        &= 2 \sum_{n \ge 1} g(n) n^a e^{- \pi n^2t/N} = \psi(t,g) \label{eq: mellin inv},
    \end{align}
    where the first equality in \eqref{eq: mellin inv} follows by Mellin inversion for $e^{-x}$ and the second follows because $g$ has parity $a.$

    Now fix $c \in H$ and $\sigma>1.$ Applying \eqref{eq: mellin inv} to $f^c$, we get
    \begin{equation}
        \psi(t,f^c)= \frac{1}{2 \pi i} \int_{(\sigma)} \Lambda(s,f^c) t^{-\frac{s+a}{2}} ds, \qquad t>0.
    \end{equation}
    Using the functional equation \eqref{eq: func eq lambda fc} for $\Lambda$ and applying the change of variables $z=1-s$, we obtain 
    \begin{equation}
        \psi(t,f^c)=  \frac{\varepsilon_c}{2 \pi i} \int_{(1-\sigma)}  \Lambda(z,\overline {f^c}) t^{-\frac{1-z+a}{2}} dz, \qquad t>0.
    \end{equation}
    However, for a function $g$ as above, using periodicity of $g$, $L(s,g)$ can be written as a linear combination of Hurwitz zeta functions
    \begin{equation} \label{eq: L(s,g)}
        L(s,g) = \frac{1}{N^s} \sum_{r=1}^N g(r) \zeta \bigl(s, \frac{r}{N} \bigr).
    \end{equation}
    Therefore, $L(s,g)$ is holomorphic on $\C \setminus \{1\}$ and has at most polynomial growth away from $s=1$. Moreover, since $g$ is periodic of parity $a$ and $\zeta(-n,\alpha)= - B_{n+1}(\alpha)/(n+1)$ \cite[Thm 12.13]{apostol2013introduction} and $B_k(1-x)= (-1)^k B_k(x)$ \cite[Def., p. 264]{apostol2013introduction}, where $B_n(x)$ are the Bernoulli functions defined in \cite[p. 264]{apostol2013introduction}, it follows by using \eqref{eq: L(s,g)} and pairing the $r$-th and $(N-r)$-th terms in $L(-a -2n,g)$ that
    \begin{equation} \label{eq: vanish trivial zeros}
        L(-a -2n,g)=0 , \qquad n\ge 0.
    \end{equation}
    
    In particular, since $\overline {f^c}$ is periodic of parity $a$, the above holds for $L(s,\overline {f^c})$. Also, since $L(s,\overline {f^c})$ is holomorphic at $s=1$\footnote{We assumed that $L(s, f^c)$ is holomorphic at $s=1$ for all $c \in H,$ which implies holomorphy of $L(s,\overline {f^c})$ at $s=1$ for all $c \in H.$ This follows from the fact that  $\text{Res}_{s=1} L(s,g)= \frac{1}{N} \sum_{r=1}^N g(r)$, and it is thus clear, by conjugation, that $\text{Res}_{s=1} L(s,g)=0$ iff $\text{Res}_{s=1} L(s, \overline{g})=0.$}, it follows that $L(s,\overline {f^c})$ is entire. Therefore, using \eqref{eq: vanish trivial zeros} and the definition of $\Lambda$ in \eqref{eq: Lambda def}, we observe that $\Lambda(s,\overline {f^c})$ is entire.
    
    Therefore, using Cauchy's theorem, the holomorphy of $\Lambda$ on $\C$, the at most polynomial growth of $L(s,\overline {f^c})$, and Stirling's formula, we can shift the line of integration from $\Re(s)=1-\sigma$ to $\Re(s)=\sigma$ and then use \eqref{eq: mellin inv} with $g=\overline {f^c}$, so that
    \begin{align}
        \psi(t,f^c)&= \varepsilon_c t^{-a-\frac{1}{2}} \frac{1}{2 \pi i}  \int_{(\sigma)}  \Lambda(z,\overline {f^c}) (t^{-1})^{-\frac{z+a}{2}} dz=  \varepsilon_c t^{-a-\frac{1}{2}} \psi \bigl(t^{-1}, \overline {f^c} \bigr) , \qquad t>0.
    \end{align}
    Applying \Cref{thm: theta converse case 2}, the claim follows.
\end{proof}

\bibliographystyle{plain}
\bibliography{ref}

\end{document}